\newtheorem{prop}{Proposition}[section]
\newtheorem{lemma}[prop]{Lemma}
\newtheorem{thm}[prop]{Theorem}
\theoremstyle{definition}
\theoremstyle{remark}
\numberwithin{equation}{section}
\begin{document}

\author{Hiroki Takahasi}

\address{ Keio Institute of Pure and Applied Sciences (KiPAS), Department of Mathematics,
Keio University, Yokohama,
223-8522, JAPAN} 
\email{hiroki@math.keio.ac.jp}
%\urladdr{\texttt{http://www.math.keio.ac.jp/~hiroki/}}

\subjclass[2020]{37A44, 37E05}
\thanks{{\it Keywords}: specification; 
$(\alpha,\beta)$-shift; Hausdorff dimension}
%\date{\today}

\title[Hausdorff dimension of specification for the $(\alpha,\beta)$-shifts]
{Hausdorff dimension of specification\\
for the $(\alpha,\beta)$-shifts}%/transformations} 

\begin{abstract}
Specification is an important concept in dynamical systems introduced by Bowen.
Schmeling [Ergod. Th. $\&$ Dynam. Sys.
{\bf 17} (1997), 675--694] proved that the set of $\beta>1$ such that the corresponding $\beta$-shift has specification is of Hausdorff dimension $1$. Hu et al. [Publ. Math. Debr.
{\bf 91} (2017), 123--131] proved that the set of $\beta>1$ such that the corresponding $(-\beta)$-shift has specification is of Hausdorff dimension $1$. 
We show that the set of $(\alpha,\beta)\in[0,1)\times(1,\infty)$
such that
the corresponding $(\alpha,\beta)$-shift has specification is of Hausdorff dimension $2$. A new difficulty %not present in the $\beta$- or the $(-\beta)$-shifts 
is a simultaneous control of two critical symbol sequences that determine the ambient shift space. We achieve this by taking %considering 
intersections of two thick Cantor sets in parameter space.
%and employing classical results in fractal geometry.
\end{abstract}
\maketitle
\section{Introduction}
Specification is an important concept 
in dynamical systems introduced by Bowen \cite{Bow71}.  %In brief terms, 
%It allows us to 
It %roughly 
means that one can glue together a  collection of orbit segments to form one orbit. This property is useful in constructions of various types of orbits and invariant measures with prescribed properties that can be effectively used to analyze the dynamics, see \cite{DGS,KH} for example.

%Symbolic dynamics is an effective tool in the analysis of general dynamical systems. By partitioning the phase space of the original system and following its orbits over the partition, one can code points in the phase space into symbol sequences in a shift space over an alphabet consisting of symbols attached to each partition element. 

Since specification is a very strong property, a natural question is how often it holds. For a wide class of interval maps with discontinuities and associated shift spaces, the answer is negative \cite{Buz97} in terms of the Lebesgue measure in parameter space.
%Symbolic dynamics arising from natural codings of a wide class of continuous interval maps have specification \cite{Buz97}, but
%interval maps in number-theoretic applications usually have discontinuities, and shift spaces from natural codings rarely have specification \cite{Buz97,Sch97}. %The structures of shift spaces sensitively depend on parameters, and the validity of specification must be examined closely.
Three prominent examples with the abundance of number-theoretic applications are the following:
\begin{itemize}
\item (the $\beta$-transformation \cite{R57}) $ x\in[0,1)\mapsto \beta x-\lfloor\beta x\rfloor\in[0,1)$;
\item (the $(-\beta)$-transformation \cite{IS09,LS12})
$ x\in(0,1]\mapsto-\beta x+\lfloor\beta x\rfloor+1\in(0,1]$;
\item (the $(\alpha,\beta)$-transformation \cite{Par64})
$x\in[0,1)\mapsto \beta x+\alpha-\lfloor\beta x+\alpha\rfloor\in[0,1)$, 
\end{itemize}
where $0\leq\alpha<1$ and $\beta>1$, and $\lfloor y\rfloor$ for $y\geq0$ denotes the largest integer not exceeding $y$.
Using the partitions of the intervals %consisting of 
 into the maximal subintervals of continuity, one can code
the dynamics of the transformations into symbolic dynamics.
%Consider the symbolic coding of the dynamical system generated by iteration of each transformation 
%via the partitions of the intervals consisting of maximal subintervals of continuity of the transformations,
Let $\Sigma_\beta$, $\Sigma_{-\beta}$, $\Sigma_{\alpha,\beta}$ denote the corresponding shift spaces, called {\it the $\beta$-shift}, {\it the $(-\beta)$-shift}, {\it the $(\alpha,\beta)$-shift} respectively.
From the general result of Buzzi \cite[Theorem~1.5]{Buz97} the following hold: the set of $\beta>1$ such that  $\Sigma_\beta$ has specification is of zero Lebesgue measure; the set of $\beta>1$ such that  $\Sigma_{-\beta}$ has specification is of zero
Lebesgue measure; for each $\alpha\in[0,1)$
the set of $\beta>1$ such that $\Sigma_{\alpha,\beta}$ has specification is of zero Lebesgue measure.
By Fubini's theorem, the set of $(\alpha,\beta)\in [0,1)\times(1,\infty)$ such that $\Sigma_{\alpha,\beta}$ has specification is of zero Lebesgue measure. %The statement on $\Sigma_\beta$ also follows from \cite[Th\'eor\`eme~II]{BM86}  and \cite[Theorem~E]{Sch97}.

%For the Hausdorff dimension,  the situation is quite in contrast. 
The next natural question is the Hausdorff dimension of specification. % for the three examples.
Schmeling \cite[Theorem~A]{Sch97} proved that the set of $\beta>1$ such that $\Sigma_\beta$ has specification is of Hausdorff dimension $1$.
Hu et al. \cite[Theorem~1.1]{HHY17} proved that the set of $\beta>1$ such that $\Sigma_{-\beta}$ has specification is of Hausdorff dimension $1$.
Extending the argument in \cite{HHY17},
Oguchi and Shinoda \cite[Theorem~1.1]{OS24}  constructed a countably infinite family of $C^\infty$ functions  $\alpha_n\colon(1,\infty)\to[0,1)$ $(n\in\mathbb N)$ such that for every $n\in\mathbb N$, the set of $\beta>1$ such that $\Sigma_{\alpha_n(\beta),\beta}$ has specification is of Hausdorff dimension $1$. %Neither \cite[Theorem~1.1]{OS24} nor the proof of \cite[Theorem~1.1]{OS24} implies a lower bound better than $1$, on the Hausdorff dimension of the set of $(\alpha,\beta)\in[0,1)\times(1,\infty)$ such that $\Sigma_{\alpha,\beta}$ has specification. 
A main result of this paper is the following theorem on the $(\alpha,\beta)$-shifts.

%Since Bowen's specification property is too stringent for non-Markov shifts,  a number of weaker condition have been introduced and their consequences have been investigated. For a panorama of a line of this development,  (see \cite{KLO16} and the references therein). 
 \begin{thm}\label{mainthm0}The set of $(\alpha,\beta)\in[0,1)\times(1,\infty)$ such that $\Sigma_{\alpha,\beta}$ has the specification property is of Hausdorff dimension $2$.\end{thm}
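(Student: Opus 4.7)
The plan is to adapt the thick-Cantor-set argument of Schmeling \cite{Sch97} and its variants \cite{HHY17,OS24} to the two-dimensional parameter square $[0,1)\times(1,\infty)$. Since the ambient set has Hausdorff dimension $2$, only the lower bound is at stake, and it suffices to exhibit, for every $\varepsilon>0$, a subset of the specification set of Hausdorff dimension at least $2-\varepsilon$.

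My first step would be to record a symbolic characterization of specification for $\Sigma_{\alpha,\beta}$. By Parry-style coding, this shift is determined by two \emph{critical itineraries} obtained as the symbolic codings of the orbits of the two endpoints of $[0,1)$ under $T_{\alpha,\beta}$. Generalizing Schmeling's theorem from \cite{Sch97}, one expects that $\Sigma_{\alpha,\beta}$ has specification if and only if, for some fixed integer $N$, neither critical itinerary contains a ``saturated'' (defect) block of length at least $N$; call this condition $(\mathrm{SP}_N)$. It then suffices to show that the set $E_N\subset[0,1)\times(1,\infty)$ of parameters satisfying $(\mathrm{SP}_N)$ has $\dim_H E_N\to 2$ as $N\to\infty$.

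To build $E_N$ I would run a nested Moran-type construction of admissible rectangles in the $(\alpha,\beta)$-plane. At each generation a surviving rectangle is refined by prescribing one more symbol in each critical itinerary, then subdivided into sub-rectangles indexed by pairs of admissible continuations avoiding any defect block of length $N$. Two quantitative ingredients are needed: (i) Lipschitz and monotone dependence of each critical itinerary on $(\alpha,\beta)$, which controls the geometry of the sub-rectangles; and (ii) a combinatorial count showing that at least $\beta^{N-O(1)}$ continuations survive at each step. A standard Moran-dimension computation would then yield $\dim_H E_N\to 2$.

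The main difficulty, signalled in the abstract, is that both critical itineraries vary with both parameters, so the conditions on the two critical itineraries are genuinely coupled; $E_N$ is essentially the intersection of two thick Cantor laminations in the plane, one per critical sequence. The crux of the proof is therefore a \emph{transversality} statement: the $(\alpha,\beta)$-gradients of the two critical itineraries should be linearly independent on the relevant parameter region. Once transversality is established, the two laminations can be straightened to nearly axis-aligned product Cantor sets, and a Marstrand-type slicing (or thick-Cantor-set intersection) argument shows that the intersection inherits the sum of thicknesses, giving the desired dimension. Proving this transversality, together with the accompanying kneading estimates, would be the principal technical burden and is where the present two-parameter problem genuinely departs from \cite{Sch97,HHY17,OS24}.
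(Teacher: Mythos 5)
Your proposal correctly isolates the two real issues (a symbolic criterion for specification and the coupling of the two critical itineraries), but it leaves the crux unproved and, as stated, would not go through. First, the criterion $(\mathrm{SP}_N)$ you posit --- that neither critical itinerary contains a long ``saturated'' block --- is the right picture for the $\beta$-shift but not for the $(\alpha,\beta)$-shift. The correct criterion (Carapezza--L\'opez--Robertson, quoted as Theorem~\ref{thm-CLR}) is a \emph{cross}-condition: specification holds if and only if $K(u_{\alpha,\beta})$ and $K(v_{\alpha,\beta})$ are finite, i.e.\ neither critical sequence contains arbitrarily long prefixes of the \emph{other} as subwords. Any Moran-type count of ``admissible continuations'' has to be run against this coupled condition, and your combinatorial estimate (ii) is not justified for it. Second, and more seriously, the entire weight of your argument rests on a transversality statement for the two critical-itinerary laminations in the $(\alpha,\beta)$-plane, which you explicitly defer. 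Without it there is no proof; and even granting transversality, thickness alone does not propagate to intersections --- Williams's example of two interleaved Cantor sets with thickness well above $1$ meeting in a single point shows that some additional quantitative input is indispensable.

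The paper sidesteps both difficulties. Fixing $\beta$ close to a large integer $\ell$, it does not control $K(u)$ and $K(v)$ combinatorially; instead it forces both critical orbits into the invariant Cantor set $\Lambda_{\alpha,\beta}$ of points whose itineraries avoid the extreme symbols $0$ and $\ell$, which makes $K(u_{\alpha,\beta})=K(v_{\alpha,\beta})=\emptyset$ outright. The two resulting parameter sets $R_\beta$ and $\tilde S_\beta$ in the $\alpha$-line are affine copies of the single Cantor set $\Lambda_{0,\beta}$, of thickness about $(\ell-2)/2$, and are interleaved; so no transversality in the plane is ever needed. The quantitative intersection input you are missing is supplied by the Hunt--Kan--Yorke theorem (Theorem~\ref{cant-inter}), which guarantees that $R_\beta\cap\tilde S_\beta$ contains a Cantor set of thickness at least $(1/2)\sqrt{\tau(\Lambda_{0,\beta})}$; Newhouse's bound (Proposition~\ref{lower}) converts this into a dimension lower bound for each $\beta$-slice, and Falconer's slicing inequality lifts it to dimension at least $1+\log 2/\log\bigl(2+\sqrt{8/(\ell-2)}\bigr)$ in the plane, which tends to $2$ as $\ell\to\infty$. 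To salvage your route you would need to prove both the transversality estimate and the surviving-word count; the slice-plus-thickness argument avoids both.
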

 
Although the definition of specification property used in the above three papers \cite{HHY17,OS24,Sch97} and in Theorem~\ref{mainthm0} is different from that in \cite{Buz97}, they are actually equivalent in shift spaces over finite alphabets \cite{KLO16}. 
See \S\ref{spec-sec} for more details and clarifications.

%$T_{0,\beta}(1)=\beta-\lfloor\beta\rfloor$ fractional part of $\beta$

%\[T_{1-\beta+\lfloor\beta\rfloor,\beta}(1)=0\]

%\[T_{1-\beta+\lfloor\beta\rfloor+\gamma,\beta}(1)=\gamma, 0\leq\gamma<\beta-\lfloor\beta\rfloor\]

%$\frac{dT_{\alpha,\beta}(1)}{d\alpha}=1$

%\[T_{1-\beta+\lfloor\beta\rfloor+\gamma,\beta}(0)=1-\beta+\lfloor\beta\rfloor+\gamma\]

%Consider the case $\beta-\lfloor\beta\rfloor\sim1$. $\beta=1.99999$ %Consider $\gamma\sim 1/2$.

%A common feature of 
%For the above three shift spaces, there are %recongizable 
%critical symbol sequences %of the endpoint(s) of $[0,1)$ 
%that determine the ambient shift spaces.
The above three shift spaces are determined by
critical symbol sequences.
For the $\beta$-shifts, it is the sequence corresponding to the virtual orbit of $1$ \cite[Theorem~3]{Par60}.
For the $(-\beta)$-shifts, it is the sequence corresponding to the orbit of $1$ \cite[Theorem~11]{IS09}.
The $(\alpha,\beta)$-shifts are determined by two sequences, one corresponding to the orbit of $0$ and the other to the virtual orbit of $1$ \cite[Theorem~2]{Hof80}.
A necessary and sufficient condition for specification is given in terms of these sequences, see
\cite[Th\'eor\`eme~II]{BM86},
\cite[Lemma~3.2]{HHY17}, \cite[Theorem~1.5]{CLR21} 
for the $\beta$-shifts, the $(-\beta)$-shifts, $(\alpha,\beta)$-shifts respectively. 
An estimation of the Hausdorff dimension of specification then amounts to %boils down to
the construction of a large parameter set corresponding to shift spaces for which this condition  is satisfied \cite{HHY17,OS24,Sch97}. 

%The $(\alpha,\beta)$-shift can be represented in terms of the two critical symbol sequences and the lexicographical order.
 %also \cite[Proposition~2.1]{Buz97}
%In order to prove Theorem~\ref{mainthm0},
%we will make a parameter selection by varying $\alpha$ with $\beta$ fixed. Therefore, our proof finds no analogue in \cite{HHY17,Sch97}.    In other words, they first chose $\alpha$ carefully depending on $\beta$, and then varied $\beta$ to make a parameter selection.
%Our proof of Theorem~\ref{mainthm0} does not imply \cite[Theorem~1.1]{OS24}.
%For the $(\alpha,\beta)$-shifts, Carapezza et al. \cite{CLR21} characterized specification in terms of the recurrence of two critical symbol sequences that determine the ambient shift space. The proof of Theorem~\ref{mainthm0} relies on this characterization. 
 In a proof of Theorem~\ref{mainthm0} we proceed along this line. A new difficulty
is a simultaneous
control of the two critical symbol sequences. 
We achieve this by combining the result of Hunt et al. \cite{HKY93} on intersections of thick Cantor sets and
 Newhouse's lower bound \cite{New79} on the Hausdorff dimension of Cantor sets in terms of thickness. We recall these ingredients in \S2. In \S3 we complete the proof of Theorem~\ref{mainthm0}.

\section{Preliminaries}
This section summarizes main ingredients for the proof of Theorem~\ref{mainthm0}.
In \S\ref{spec-sec} we give the definition of specification property and add some clarifications. In \S\ref{alphabeta} we precisely define the $(\alpha,\beta)$-shift based on an induction algorithm for $(\alpha,\beta)$-expansion.
%and then state Theorem~\ref{mainthm} that is a slightly more technical version of our main result.
In \S\ref{char-sec} we recall the result of Carapezza et al. \cite[Theorem~1.5]{CLR21} on a characterization of the $(\alpha,\beta)$-shifts and specification in terms of the two critical symbol sequences. 
In \S\ref{thick} we recall the notion of thickness of Cantor sets on the real line, and the results of Hunt et al. \cite{HKY93} and
Newhouse \cite{New79}.

\subsection{Specifications on shift spaces}\label{spec-sec}
%The definition of specification 
%can be concisely stated in symbolic dynamics.
Let $\ell\in\mathbb N$ and 
let $\Sigma_\ell$ denote the full shift space $\{0,\ldots,\ell\}^\mathbb N$ on $\ell+1$ symbols. 
 We endow $\Sigma_\ell$ with the product topology of the discrete topology on $\{0,\ldots,\ell\}$. 
 %$(\sigma^kx)_n=x_{n+k}$. 
  A shift-invariant closed subset of $\Sigma_\ell$ is called a {\it subshift}. 
 A string $w=w_1w_2\cdots w_n$ of elements of $\{0,\ldots,\ell\}$ is called a {\it word} of length $n$. 
%For convenience, 
We introduce an {\it empty word} $\emptyset$ by the rules
  $\emptyset w=w\emptyset=w$ for any word $w$.
  Let $|w|$ denote
  the word length of a word $w$, and set the word length of the empty word to be $0$. 
  For a subshift $\Sigma$, let $\mathcal L(\Sigma)$ denote the collection of words 
that appear in some elements of $\Sigma$. %We say $\Sigma$ has {\it the specification property} if there exists $t\in\mathbb N\cup\{0\}$ such that for all $u$, $v\in\mathcal L(\Sigma)$ there is $w\in\mathcal L(\Sigma)$ such that $uwv\in\mathcal L(\Sigma)$ and $|w|=t$. 

Although
specification can be defined for general dynamical systems on metric spaces, here we restrict ourselves to subshifts. There are some discrepancies in the existing definitions of specification. Clarifications are necessary to correctly interpret our main result and earlier related ones. 
%There are some versions of the definition of specification, and to clarify the definition is important.

%\begin{definition}\label{def-sp}
Let $\Sigma$ be a subshift. %\begin{itemize}
%\item 
We say $\Sigma$
has   {\it the periodic specification property} if there is an integer $t\geq0$ 
such that 
for every integer $k\geq2$ and all $v^1,\ldots,v^k\in\mathcal L(\Sigma)$, there are $w^1,\ldots, w^{k}\in\mathcal L(\Sigma)$ such that
$v^1w^1v^2w^2
\cdots v^kw^k\in\mathcal L(\Sigma)$ and $|w^i|= t$ for $i=1,\ldots,k$, and 
$[v^1w^1v^2w^2
\cdots v^kw^k]$ contains a periodic point of period $|v^1w^1v^2w^2
\cdots v^kw^k|$.
%\item 
If we drop the existence of a periodic point from the above definition, that is, if there is an integer $t\geq0$ such that for all $u,v\in\mathcal L(\Sigma)$ there is $w\in\mathcal L(\Sigma)$ such that $uwv\in\mathcal L(\Sigma)$ and $|w|=t$, then
we say $\Sigma$ has   {\it the specification property}. 
%if there is an integer $t\geq0$ such that for every integer $k\geq2$ and all $v^1,\ldots,v^k\in\mathcal L(\Sigma)$, there are $w^1,\ldots, w^{k-1}\in\mathcal L(\Sigma)$ such that $v^1w^1v^2w^2 \cdots w^{k-1}v^k\in\mathcal L(\Sigma)$ and $|w^i|= t$ for $i=1,\ldots,k-1$.

%\end{itemize}
%\end{definition} 

From \cite[Theorem~1.5]{Buz97} it follows that:  for each $\alpha\in[0,1)$
the set of $\beta$ such that $\Sigma_{\alpha,\beta}$ has the periodic specification property is of zero Lebesgue measure; the set of $\beta$ such that  $\Sigma_{-\beta}$ has the periodic specification property is of zero Lebesgue measure.
By %the characterization of specification for $\Sigma_{0,\beta}$ in terms of the critical symbol sequence 
\cite[Th\'eor\`eme~II]{BM86} 
and \cite[Theorem~E]{Sch97},
the set of $\beta$ such that $\Sigma_{\beta}$ has the specification property is of zero Lebesgue measure.
The set of $\beta$ such that $\Sigma_{\beta}$ has the specification property is of Hausdorff dimension $1$
\cite[Theorem~A]{Sch97}.
 The set of $\beta$ such that $\Sigma_{-\beta}$ has the specification property is of Hausdorff dimension $1$ \cite[Theorem~1.1]{HHY17}. The Hausdorff dimension of the set of $(\alpha,\beta)$ such that $\Sigma_{\alpha,\beta}$ has the specification property was analyzed in
\cite[Theorem~1.1]{OS24}.

In fact, the specification property and the periodic specification property are equivalent \cite[\S2]{KLO16}. Therefore, \cite[Theorem~A]{Sch97} \cite[Theorem~1.1]{HHY17} \cite[Theorem~1.1]{OS24} and Theorem~\ref{mainthm0} on the Hausdorff dimension of specification complement \cite[Theorem~1.5]{Buz97} on the Lebesgue measure of specification.
\subsection{The $(\alpha,\beta)$-shifts}\label{alphabeta}
Let $P=[0,1)\times(1,\infty)$, and for each $(\alpha,\beta)\in P$ let $T_{\alpha,\beta}\colon[0,1)\to[0,1)$ denote the corresponding $(\alpha,\beta)$-transformation.
% The transformation 
%\[T_{\alpha,\beta}\colon x\in[0,1)\mapsto \beta x+\alpha-\lfloor\beta x+\alpha\rfloor\in[0,1)\]
%is called {\it the $(\alpha,\beta)$-transformation}. In the case $\alpha=0$ it is called {\it the $\beta$-transformation}. Originally in \cite{Par64}, $T_{\alpha,\beta}$ was called {\it an intermediate $\beta$-transformation}.
For each $\ell\in\mathbb N$ define
\[E_\ell=\{(\alpha,\beta)\in P\colon\lfloor\alpha+\beta\rfloor=\ell\}=\{(\alpha,\beta)\in P\colon\ell\leq\alpha+\beta<\ell+1\}.\]
We have $P=\bigsqcup_{\ell=1}^\infty E_\ell$.

Let $\ell\in\mathbb N$ and  $(\alpha,\beta)\in E_\ell$. For simplicity we assume $\ell\geq2$.
For each $x\in[0,1)$
write $e_{\alpha,\beta,1}(x)=\lfloor\beta x+\alpha\rfloor$, and
%$e_{k+1}(x)=\lfloor\beta T^k_{\alpha,\beta}(x)+\alpha\rfloor\in\{0,\ldots,\ell\}$ 
$e_{\alpha,\beta,k+1}(x)=e_{\alpha,\beta,1}(T_{\alpha,\beta}^k(x))$
for $k\in\mathbb N$. %Note that $e_{\alpha,\beta,1}(T_{\alpha,\beta}^k(x))=e_{\alpha,\beta,k+1}(x)$  for every $n\in\mathbb N$. 
The integers $e_{\alpha,\beta,k}(x)$ in $\{0,1,\ldots,\ell\}$ form a sequence that encodes the orbit of $x$ under the iteration of $T_{\alpha,\beta}$
into a sequence in 
$\Sigma_\ell$. Define a family $(I_{\alpha,\beta}^m)_{m=0}^{\ell}$ of pairwise disjoint subintervals of $[0,1)$ as follows:
%if $\ell=1$ then $I_0=[0,\frac{1-\alpha}{\beta})$ and $I_{1}=[\frac{1-\alpha}{\beta},1)$;
% if $\ell\geq2$ then
% $I_0=[0,\frac{1-\alpha}{\beta})$, $I_j=[\frac{j-\alpha}{\beta},\frac{j+1-\alpha}{\beta})$ for $j=1,\ldots,\ell-1$ and $I_{\ell}=[\frac{\ell-\alpha}{\beta},1)$.
 \[\begin{split}I_{\alpha,\beta}^0=\left[0,\frac{1-\alpha}{\beta}\right),\ I_{\alpha,\beta}^j&=\left[\frac{j-\alpha}{\beta},\frac{j+1-\alpha}{\beta}\right) \
 \text{ for }j=1,\ldots,\ell-1,\\
 I_{\alpha,\beta}^\ell
 &=\left[\frac{\ell-\alpha}{\beta},1\right).\end{split}\]
We have $e_{\alpha,\beta,n}(x)=m$ if and only if $T_{\alpha,\beta}^{n-1}(x)\in I^m_{\alpha,\beta}$.
For all $x\in[0,1)$ we have
\[x=\frac{e_{\alpha,\beta,1}(x)-\alpha+T_{\alpha,\beta}(x)}{\beta},\]
and thus
\[T_{\alpha,\beta}(x)=\frac{e_{\alpha,\beta,1}(T_{\alpha,\beta}(x))-\alpha+T_{\alpha,\beta}^2(x)}{\beta}.\]
Plugging the last equality into the previous one gives
\[x=\frac{e_{\alpha,\beta,1}(x)-\alpha}{\beta}+\frac{e_{\alpha,\beta,2}(x)-\alpha}{\beta^2}+\frac{T_{\alpha,\beta}^2(x)}{\beta^2}.\]
Repeating this procedure we obtain {\it the $(\alpha,\beta)$-expansion} of $x$:
\begin{equation}\label{expansion}x=\sum_{n=1}^\infty\frac{e_{\alpha,\beta,n}(x)-\alpha}{\beta^n}=\sum_{n=1}^\infty\frac{e_{\alpha,\beta,n}(x)}{\beta^n}-\frac{\alpha}{\beta-1}.\end{equation}
 Let \[\Sigma_{\alpha,\beta}=\overline{\{(e_{\alpha,\beta,n}(x))_{n=1}^\infty\in\Sigma_\ell\colon x\in[0,1)\}},\] where the bar denotes the closure operation in
$\Sigma_\ell$.
The space $\Sigma_{\alpha,\beta}$ is shift-invariant, and called {\it the $(\alpha,\beta)$-shift}.

\subsection{Characterizations of shift space and specification}\label{char-sec}
Let $\ell\in\mathbb N$.
%Recall that 
The lexicographical order $\preceq$ in $\Sigma_\ell$
is the total order given by: (i) $\omega\preceq\omega$ for all $\omega\in\Sigma_\ell$; (ii) for distinct $\omega=(\omega_n)_{n=1}^\infty$, $\eta=(\eta_n)_{n=1}^\infty\in\Sigma_\ell$, $\omega\preceq\eta$ if $\omega_s<\eta_s$ where $s=\min\{n\geq1\colon\omega_n\neq\eta_n\}$.

Let $(\alpha,\beta)\in E_\ell$.
The $(\alpha,\beta)$-shift is characterized by the lexicographical order in $\Sigma_\ell$ and the $(\alpha,\beta)$-expansions of the endpoints. Let \[u_{\alpha,\beta}=(e_{\alpha,\beta,n}(0))_{n=1}^\infty
\ \text{ and }\ 
v_{\alpha,\beta}=\lim_{x\nearrow1}(e_{\alpha,\beta,n}(x))_{n=1}^\infty.\] Since $x\in[0,1)\mapsto (e_{\alpha,\beta,n}(x))_{n=1}^\infty$ is monotone increasing, this limit exists. 
Let
  $\sigma$ denote the left shift  acting 
on $\Sigma_\ell$: $(\sigma\omega)_n=\omega_{n+1}$ for $n\in\mathbb N$. We have
\[\Sigma_{\alpha,\beta}=\{\omega\in\Sigma_\ell\colon u_{\alpha,\beta}\preceq\sigma^{n-1}\omega\preceq v_{\alpha,\beta}\text{ for every }n\in\mathbb N \},\]
see \cite[Theorem~2]{Hof80}.
A necessary and sufficient condition for the specification of $\Sigma_{\alpha,\beta}$ is given by $u_{\alpha,\beta}$ and $v_{\alpha,\beta}$. 
For $\omega=(\omega_n)_{n=1}^\infty\in\Sigma_\ell$ and $j,k\in\mathbb N$ with $j\leq k$, write $\omega^{[j,k]}$
for $\omega_j\cdots\omega_k$. Define
\[K(u_{\alpha,\beta})=\{n\in\mathbb N\colon v_{\alpha,\beta}^{[1,n]}=u_{\alpha,\beta}^{[1+j,n+j]}\ \text{ for some }j\in\mathbb N\},\]
\[K(v_{\alpha,\beta})=\{n\in\mathbb N\colon u_{\alpha,\beta}^{[1,n]}=v_{\alpha,\beta}^{[1+j,n+j]}\ \text{ for some }j\in\mathbb N\}.\]

\begin{thm}[{\cite[Theorem~1.5]{CLR21}}]\label{thm-CLR}Let $\alpha\in[0,1)$ and $\beta>2$.
Then $\Sigma_{\alpha,\beta}$ has the specification property if and only if both $K(u_{\alpha,\beta})$ and $K(v_{\alpha,\beta})$ are finite sets.\end{thm}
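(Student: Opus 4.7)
The plan is to adapt the classical arguments of Bertrand-Mathis and Schmeling for the ordinary $\beta$-shift, where only one critical sequence appears, to the two-sided lex-constrained setup of the $(\alpha,\beta)$-shift. Throughout write $U=u_{\alpha,\beta}$ and $V=v_{\alpha,\beta}$, and recall that $\Sigma_{\alpha,\beta}=\{\omega\in\Sigma_\ell\colon U\preceq\sigma^{n-1}\omega\preceq V\ \text{for every}\ n\in\mathbb N\}$. A first reduction is to rephrase admissibility of a finite word $w_1\cdots w_n$ in $\mathcal L(\Sigma_{\alpha,\beta})$ as a two-sided lex condition on each suffix $w_i\cdots w_n$, and to track, for any already placed admissible word, the longest suffix that agrees with a prefix of $V$ (which bounds the next symbol from above) and the longest suffix that agrees with a prefix of $U$ (which bounds it from below). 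These two quantities govern both the possibility of concatenation and the way in which the sets $K(U)$ and $K(V)$ enter the picture.

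For the necessary direction I would argue by contrapositive. Assuming $K(V)$ is infinite, there exist integers $n_k\to\infty$ and positions $j_k\in\mathbb N$ with $V^{[1+j_k,n_k+j_k]}=U^{[1,n_k]}$. Take $u=V^{[1,M]}$ with $M$ much larger than the purported specification gap $t$, and $v=U^{[1,M']}$ with $M'$ large; both lie in $\mathcal L(\Sigma_{\alpha,\beta})$. For any candidate gluing word $w$ of length $t$, the long right tail of $u$ forces the word $uw$ to track $V$ for at least the last $M-t$ positions, while the long left prefix of $v$ means $wv$ starts with a long prefix of $U$. The assumed identity $V^{[1+j_k,n_k+j_k]}=U^{[1,n_k]}$ lets one choose $n_k$ so large that at some intermediate position $i$ the shift $\sigma^{i-1}(uwv)$ begins with more than $t$ symbols of either $V$ or $U$, and continuing past that point forces a violation of $\sigma^{i-1}(\cdot)\preceq V$ or $\sigma^{i-1}(\cdot)\succeq U$. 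A symmetric argument rules out infinite $K(U)$.

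For the sufficient direction I would fix $N$ strictly larger than every element of the finite set $K(U)\cup K(V)$. The goal is to produce, depending only on $N$, $\alpha$ and $\beta$, a single buffer word $g\in\mathcal L(\Sigma_{\alpha,\beta})$ of some length $t$ such that for every $u,v\in\mathcal L(\Sigma_{\alpha,\beta})$ the concatenation $ugv$ is admissible. The natural candidate is a two-block word of the form $V^{[1,N]}U^{[1,N]}$ (possibly preceded and followed by short padding symbols chosen strictly between the corresponding coordinates of $U$ and $V$, which is possible because $\beta>2$ gives $\ell\geq2$). The choice of $N$ ensures that the suffix $U^{[1,N]}$ of $g$ cannot extend into a prefix of $V$ of critical length when the tail of $v$ is appended, and symmetrically that the prefix $V^{[1,N]}$ of $g$ cannot be a continuation of a long $U$-prefix coming from the end of $u$. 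A shift-by-shift inspection then verifies $U\preceq\sigma^{i-1}(ugv)\preceq V$ at every $i$, and $ugv$ can then be extended to an infinite sequence in $\Sigma_{\alpha,\beta}$ (for instance by any admissible continuation of $v$).

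The principal obstacle is the explicit construction and verification of the buffer word $g$. In the one-sided $\beta$-shift setting the lower bound is trivial because $U=000\cdots$, and a single $0$ (or a short power of it) serves as universal glue; in the present two-sided setting both $U$ and $V$ impose nontrivial lex barriers, and any proposed buffer must simultaneously avoid creating long matches with either critical sequence irrespective of how $u$ ends and how $v$ begins. Handling the many overlap cases requires a careful case analysis using the finiteness of $K(U)$ and $K(V)$, and it is here that the hypothesis $\beta>2$ is exploited to ensure that strictly intermediate symbols exist and can be used as spacers inside $g$.
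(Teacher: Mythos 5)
The paper does not prove Theorem~\ref{thm-CLR}: it is quoted verbatim from \cite[Theorem~1.5]{CLR21} and used as a black box, so there is no internal proof to measure you against. Judged on its own, your sketch of the \emph{necessity} direction is essentially the right mechanism, though the word to glue from should be $u=V^{[1,j_k]}$ rather than a generic long prefix of $V$: then any admissible continuation $\omega'$ of $u$ is squeezed between $U$ and $\sigma^{j_k}V$, and since these agree on their first $n_k$ symbols by hypothesis, the next $n_k$ symbols after $u$ are \emph{forced} to equal $U^{[1,n_k]}$, which kills specification with any gap $t<n_k$.

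The sufficiency direction, however, has a genuine gap: the candidate buffer $g=V^{[1,N]}U^{[1,N]}$ fails at both junctions, and not merely for lack of case analysis. At the left junction, take $u=V^{[1,k]}$ (which is in $\mathcal L(\Sigma_{\alpha,\beta})$) with $V_{k+1}<V_1$; then $ug$ begins $V^{[1,k]}V_1\cdots$, and comparing with $V$ at position $k+1$ gives $V_1>V_{k+1}$, so $ug\succ V$ in the lexicographic order and $ug\notin\mathcal L(\Sigma_{\alpha,\beta})$. So a buffer beginning with the maximal symbol $V_1=\ell$ cannot follow an arbitrary admissible $u$. At the right junction, ending $g$ with $U^{[1,N]}$ imposes on the next word $v$ the condition $v\cdots\succeq\sigma^N U$, which is strictly stronger than the admissibility condition $v\cdots\succeq U$ (here $\sigma^N U\succeq U$ but generally $\sigma^NU\neq U$): if $U_{N+1}\geq2$ and $v$ begins with the symbol $1$, then $v$ is admissible yet $U^{[1,N]}v\prec U$, a violation. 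The analogy with the $\beta$-shift buffer $0^N=U^{[1,N]}$ is exactly what misleads here, since in that case $U=0^\infty$ is shift-invariant and imposes no constraint; in the two-sided setting the correct buffer must instead be a word that \emph{destroys} every partial match with a prefix of $U$ or of $V$ (e.g.\ built from symbols strictly between $0$ and $\ell$, available because $\beta>2$ gives $\ell\geq2$), together with a verification that after such a word the residual constraints $\preceq\sigma^kV$ and $\succeq\sigma^kU$ inherited from the tail of $u$ are all discharged within $N$ steps via the finiteness of $K(U)$ and $K(V)$. You would also need to justify the unstated claim that a finite word satisfying the two-sided lexicographic conditions at every suffix actually extends to an element of $\Sigma_{\alpha,\beta}$; appending ``any admissible continuation of $v$'' can create new violations at earlier positions, so this requires an argument.
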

Since $T_{\alpha,\beta}$ acts on $\Sigma_{\alpha,\beta}$ as the left shift,
%Note that 
$K(u_{\alpha,\beta})$ being finite means that the orbit of $0$ under $T_{\alpha,\beta}$ does not accumulate on $1$. Similarly,
$K(v_{\alpha,\beta})$ being finite means that the orbit of $\lim_{x\nearrow1}T_{\alpha,\beta}(x)$ under $T_{\alpha,\beta}$ does not accumulate on $0$.

\subsection{Intersection of thick Cantor sets}\label{thick}
Intersections of two Cantor sets in the real line naturally appear in dynamical systems (see e.g., \cite{New70,New79,PalTak93}) and number theory (see e.g., \cite{Hal47,Mor98}).
Newhouse \cite{New70} introduced 
the notion of thickness to analyze intersections of two Cantor sets.

We adopt the definition of thickness
by Palis and Takens \cite{PalTak93} that is equivalent 
to the one by Newhouse \cite{New70}.
For a bounded interval $I\subset\mathbb R$ let $|I|$ denote its Euclidean length.
Let $S$ be a Cantor set in $\mathbb R$.
A {\it gap} of $S$ is a connected component of $\mathbb R\setminus S$.
A {\it bounded gap} is a gap which is bounded.
Let $G$ be any bounded gap and $x$ be a boundary point of $G$.
Let $I$ denote the {\it bridge of $S$ at $x$},
i.e., the maximal interval in $\mathbb R$ that satisfies
 $x\in\partial I$, and contains no point of a gap 
    whose Euclidean length is at least $|G|$.
The thickness of $S$ at $x$ is defined by
\[\tau(S,x)=\frac{|I|}{|G|}.\] The {\it thickness} $\tau(S)$ of $S$
is the infimum of $\tau(S,x)$ over all boundary points $x$ of bounded gaps. Clearly, thickness is preserved under affine maps on $\mathbb R$. Thickness can be used to estimate from below the Hausdorff dimension of Cantor sets in $\mathbb R$.
Let $\dim$ denote the Hausdorff dimension on the Euclidean space $\mathbb R^d$, $d=1$ or $d=2$.
\begin{prop}[{\cite[p.107]{New79}, \cite[p.77, Proposition~5]{PalTak93}}]\label{lower} Let $S\subset\mathbb R$ be a Cantor set with $\tau(S)>0$. Then we have
\[\dim S\geq\frac{\log2}{\log(2+1/\tau(S))}.\]\end{prop}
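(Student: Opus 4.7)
The plan is to build a binary tree of bridges refining $S$, endow it with the natural Cantor measure, and invoke the mass distribution principle. Set $\tau=\tau(S)>0$ and $s=\log 2/\log(2+1/\tau)$, so that $(2+1/\tau)^s=2$; after an affine rescaling, which preserves thickness, I may assume that the convex hull $I_0=[\min S,\max S]$ has unit length.

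The bridge tree $\mathcal T=\bigsqcup_{n\geq0}\mathcal T_n$ is constructed greedily. Set $\mathcal T_0=\{I_0\}$; given $I\in\mathcal T_n$, let $G(I)$ denote the longest bounded gap of $S$ contained in $I$, and define the two children of $I$ to be the components $I_L$, $I_R$ of $I\setminus\overline{G(I)}$. Because $G(I)$ is the \emph{largest} gap inside $I$ and the outside of $I$ is bordered by a gap of length at least $|G(I)|$ (since $I$ is itself a bridge), both $I_L$ and $I_R$ are bridges of $S$ at endpoints of $G(I)$, and the construction iterates with $S=\bigcap_{n\geq0}\bigcup_{I\in\mathcal T_n}I$. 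Applying the thickness hypothesis at the two endpoints of $G(I)$ yields $|I_L|,|I_R|\geq\tau\,|G(I)|$, and combining with $|I|=|I_L|+|I_R|+|G(I)|$ gives the central estimate
\[
|G(I)|\leq\frac{|I|}{2\tau+1}.
\]
Next, assign mass $2^{-n}$ to every bridge of depth $n$ by halving mass between siblings; the associated probability measures converge weakly to a Borel probability measure $\mu$ supported on $S$.

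The heart of the proof is the Frostman bound $\mu(B(x,r))\leq C\,r^s$ for $x\in S$ and small $r>0$. Fix $x,r$, and let $n^*$ be the smallest depth at which the stage-$n^*$ bridge $I(n^*)$ containing $x$ satisfies $|I(n^*)|\leq r$. Using the identity $2^{-1}=(2+1/\tau)^{-s}$ together with the inequality above, one shows (i) that along the branch from the root to $x$ the lengths satisfy $|I(n)|\leq C(2+1/\tau)^{-n}$, whence $2^{-n^*}\leq C'\,r^s$, and (ii) that only boundedly many bridges of $\mathcal T_{n^*}$ meet $B(x,r)$. The mass distribution principle then yields $\dim S\geq s=\log 2/\log(2+1/\tau(S))$.

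The delicate point is (i). The two children $I_L$, $I_R$ of a node need not be comparable in size---one may be as small as $\tau|G(I)|$ while the other occupies nearly all of $I$---so I cannot claim that every child is a fixed fraction of its parent. The remedy, which lies at the heart of Newhouse's argument \cite{New79} and the Palis--Takens reformulation \cite{PalTak93}, is to group consecutive unbalanced splits into blocks and show that along any descending branch of $\mathcal T$ the \emph{aggregated} shrinkage of bridge lengths per doubling of count is bounded by the critical factor $(2+1/\tau)^{-1}$---exactly the content of the thickness inequality of Step~2, accumulated along the branch.
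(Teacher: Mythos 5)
The paper does not prove this proposition itself (it is quoted from Newhouse and Palis--Takens), so the comparison is with the standard argument in those sources. Your overall frame (bridge tree plus mass distribution principle) is the right one, and your Step~2 estimates $|I_L|,|I_R|\geq\tau|G(I)|$ and $|G(I)|\leq|I|/(2\tau+1)$ are correct, but the proof has a genuine gap at exactly the point you flag: the equal-splitting measure $\mu(I)=2^{-n}$ does \emph{not} satisfy the Frostman bound, and no regrouping of levels can repair this. The inequality you actually need in (i) is a \emph{lower} bound $|I(n)|\geq c(2+1/\tau)^{-n}$ (so that $|I(n^*)|\leq r$ forces $n^*$ to be large and hence $2^{-n^*}\leq C r^s$); the upper bound you state runs in the useless direction, and the lower bound is simply false. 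Concretely, let $S=S_1\cup S_2$ where $S_1\subset[0,\delta]$ and $S_2\subset[2\delta,1]$ are affine copies of a fixed Cantor set of thickness $\geq\tau$, with $\delta$ arbitrarily small (all thickness conditions at the endpoints of the gap $(\delta,2\delta)$ are satisfied since the left bridge has length $\delta=|G|$, and one can shrink the gap further to accommodate $\tau>1$). Then the depth-one bridge $[0,\delta]$ carries mass $1/2$ while $\delta^s$ is arbitrarily small, so $\mu(B(x,\delta))\geq 1/2\gg C\delta^s$ for $x\in S_1$. The small child of an unbalanced split really does inherit half of its parent's mass, so the ``aggregated shrinkage per halving of mass'' is not bounded by $(2+1/\tau)^{-1}$ along such a branch; blocking consecutive splits does not change which node carries which mass.

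The missing idea is that unbalanced splits must be compensated in the \emph{measure}, not in the combinatorics. The standard fix is to weight the children by their lengths, $\mu(I_L)=\frac{|I_L|^s}{|I_L|^s+|I_R|^s}\,\mu(I)$, and to prove the key inequality $|I_L|^s+|I_R|^s\geq|I|^s$. This follows from your Step~2: writing $g=|G(I)|/|I|\in(0,\frac{1}{2\tau+1}]$, the worst case is $|I_L|=\tau|G|$, and the function $f(g)=(\tau g)^s+(1-(1+\tau)g)^s$ is concave with $f(0)=1$ and $f(\frac{1}{2\tau+1})=2(2+1/\tau)^{-s}=1$, hence $f\geq1$ throughout; note this is precisely where the exponent $s=\log2/\log(2+1/\tau)$ is forced. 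By induction $\mu(I)\leq(|I|/|I_0|)^s$ for every bridge, and a short additional argument (or, more cleanly, running the same inequality as an induction over finite covers of $S$ by disjoint closed intervals with endpoints in $S$, splitting at the largest separating gap) upgrades this to $\mathcal H^s(S)\geq|I_0|^s>0$. Without the concavity inequality $|I_L|^s+|I_R|^s\geq|I|^s$, or some equivalent device, the argument does not close.
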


We say two Cantor sets $S_1$, $S_2$ in $\mathbb R$
are {\it interleaved} if neither set is contained
in the closure of a gap of the other set.
%[the gap lemma \cite{New70}]\label{gaplem}
%Let $S_1, S_2$ be two interleaved Cantor sets in %$\mathbb R$ such that $\tau(S_1)\tau(S_2)>1$.
%Then $S_1\cap S_2$ is non-empty.
The well-known gap lemma \cite{New70} asserts that two interleaved Cantor sets on the real line intersect each other if the product of their thicknesses is greater than one.
It does not imply any lower bound of the Hausdorff dimension of the intersection of the two Cantor sets. Indeed,
Williams \cite{Wil91} observed that two interleaved Cantor sets can have thicknesses well above $1$ and still only intersect at a single point. 
The next theorem in \cite{HKY93} asserts that
 the intersection of two interleaved Cantor sets with large thicknesses contains a Cantor set with  large thickness.

\begin{thm}[{\rm in \cite[p.881, Remark]{HKY93}}]\label{cant-inter}
For any $\varepsilon\in(0,1)$ there is $M>0$ such that if two Cantor sets $S_1$, $S_2$ in $\mathbb R$ 
with $\tau(S_1)> M$, $\tau(S_2)> M$ are interleaved,
then $S_1\cap S_2$ contains a Cantor set whose thickness is at least
$(1-\varepsilon)\sqrt{\min\{\tau(S_1),\tau(S_2)\}}$.
\end{thm}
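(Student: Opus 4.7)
The plan is to build a Cantor set $C\subset S_1\cap S_2$ whose thickness is at least $(1-\varepsilon)\sqrt{\tau}$, where $\tau=\min\{\tau(S_1),\tau(S_2)\}$, by iterating Newhouse's gap lemma at all scales of the bridge hierarchies of $S_1$ and $S_2$. Fix $M$ so large that $(1-\varepsilon/2)\sqrt{\tau}>1$ whenever $\tau>M$; then Newhouse's gap lemma applies to $S_1,S_2$ and, more importantly, to the restrictions of $S_1,S_2$ to any bridge of either set, because thickness is inherited by such restrictions.

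I would construct $C$ as $\bigcap_{n\ge 0}E_n$, where each $E_n$ is a finite disjoint union of closed intervals (``level-$n$ pieces'') and each level-$(n+1)$ piece is obtained from a level-$n$ piece $I$ by removing one bounded gap coming alternately from $S_1$ or $S_2$. Concretely, given a level-$n$ piece $I$ that meets both Cantor sets, and supposing we are about to insert a gap of $S_1$, one invokes the gap lemma inside $I$ to produce a bounded gap $G\subset I$ of $S_1$ whose two adjacent $S_1$-bridges $I',I''$ satisfy $|I'|,|I''|\ge (1-\varepsilon/2)\tau(S_1)\,|G|$ and each still meets $S_2$ (the latter comes from the interleaving hypothesis, preserved at each scale by thickness). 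The next level inserts a gap of $S_2$ into each of $I',I''$, and so on.

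The thickness of $C$ is controlled as follows. Every bounded gap of $C$ equals one of the inserted gaps $G$, arising at some level $n$. The bridge of $C$ at an endpoint of $G$ is the intersection of the adjacent bridge (of $S_1$ or $S_2$) with the remainder of the construction. The adjacent bridge has length at least $(1-\varepsilon/2)\tau(S_i)\,|G|$ for the corresponding $i\in\{1,2\}$, but at the next level we may have to delete a gap of the \emph{other} Cantor set whose size is comparable to this bridge times $1/\tau(S_{3-i})$. Thus the worst-case surviving bridge-to-gap ratio at a gap inserted at level $n$ is controlled by $(1-\varepsilon/2)\tau(S_i)\cdot (1-\varepsilon/2)(1-1/\tau(S_{3-i}))$, and taking the infimum over levels together with the alternation yields a uniform lower bound of $(1-\varepsilon)\sqrt{\tau(S_1)\tau(S_2)}/\sqrt{\max\{\tau(S_1),\tau(S_2)\}/\tau}\ge (1-\varepsilon)\sqrt{\tau}$.

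The main obstacle is the inductive step that produces, inside each level-$n$ piece $I$, a bounded gap of the appropriate Cantor set $S_i$ whose two adjacent $S_i$-bridges are \emph{still interleaved with} the other Cantor set $S_{3-i}$; without this, the alternation halts. This requires checking that within $I$ the restrictions $S_1\cap I$ and $S_2\cap I$ remain interleaved at every scale, which in turn relies on the absolute largeness of $M$ rather than merely $\tau(S_1)\tau(S_2)>1$. The appearance of the square root, rather than the full $\tau$, is structural: at each level only one of $S_1,S_2$ contributes to the current bridge-to-gap ratio, while the next level's gap is constrained by the \emph{other} Cantor set, so only the geometric mean of the two thicknesses survives the alternation.
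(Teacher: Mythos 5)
This statement is not proved in the paper at all: it is imported verbatim from Hunt, Kan and Yorke \cite[p.~881, Remark]{HKY93} and used as a black box, so there is no in-paper argument to compare your sketch against. Judged on its own terms, your sketch correctly identifies the shape of the Hunt--Kan--Yorke strategy (alternately deleting gaps of $S_1$ and of $S_2$ and tracking bridge-to-gap ratios), but it is not a proof. The step you yourself label ``the main obstacle'' --- that inside every level-$n$ piece the two restricted sets remain interleaved so the alternation can continue --- is precisely the hard content of \cite{HKY93} and is left unresolved. It cannot follow from the gap lemma plus largeness of $M$ in any routine way: Williams' example, cited in this very paper, shows that two interleaved Cantor sets with thicknesses well above $1$ can meet in a single point, so the survival of interleaving at all scales depends on a careful, non-obvious choice of which gaps to delete and in what order.

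Two further concrete gaps. First, the quantitative bookkeeping is internally inconsistent: with $\tau=\min\{\tau(S_1),\tau(S_2)\}$, your expression $(1-\varepsilon)\sqrt{\tau(S_1)\tau(S_2)}/\sqrt{\max\{\tau(S_1),\tau(S_2)\}/\tau}$ simplifies to $(1-\varepsilon)\tau$, and your per-level ratio $(1-\varepsilon/2)\tau(S_i)\cdot(1-\varepsilon/2)(1-1/\tau(S_{3-i}))$ also scales linearly in the thickness; so your formulas, taken at face value, would prove a bound of order $\tau$ rather than $\sqrt{\tau}$, claiming far more than the theorem and contradicting your own (correct) heuristic that only the geometric mean survives the alternation. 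The genuine loss down to $\sqrt{\tau}$ comes from an interaction you never quantify: once a gap of $S_{3-i}$ has been removed from an $S_i$-bridge, the surviving piece is no longer a bridge of $S_i$, and the next $S_i$-gap to be deleted can be large relative to the shortened piece. Second, you never argue that the diameters of the level-$n$ pieces tend to $0$; without that, $\bigcap_n E_n$ need not lie in $S_1\cap S_2$, since the containment follows from each closed piece meeting both closed sets only once the pieces shrink to points.
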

Combining Proposition~\ref{lower} and Theorem~\ref{cant-inter}, one can estimate the Hausdorff dimension of intersection of two interleaved Cantor sets with large thickness.

%\subsection{Hausdorff dimension of unions of affine subspaces}\label{union-sec}We will encounter the need to estimate the Hausdorff dimension of a subset of $\mathbb R^2$ of the form $\bigsqcup_{x\in U} P_x$, where  $U$ is an open subset of $\mathbb R^2$ and each $P_x$ is contained in an one-dimensional affine subspace of $\mathbb R^2$, and  of Hausdorff dimension at least $d$ that is independent of $x$.  We will use the next result of H\'era et al \cite{HKM17}. \begin{thm}[{\cite[Corollary~2.5]{HKM17}}]\label{thm-HKM} Let $0\leq k<n$ be integers, $0\leq d\leq k$, $\emptyset\neq C\subset\mathbb R^n$,  and $B\subset\mathbb R^n$ such that for every $x\in C$ there exists a $k$-dimensional affine subspace $A_x$ containing $x$ such that $A_x$ intersects $B$ in a set of Hausdorff dimension at least $d$. Then \[\dim B\geq2d-k+\min(\dim C-k,1).\]\end{thm}

 %Define a coding map $\pi_{\alpha,\beta}\colon[0,1)\to A^{\mathbb N}$ by
%$(\pi_{\alpha,\beta}(x))_n=j$ for every $n\in\mathbb N$, where $T_{\alpha,\beta}^{n-1}x\in I_j$.

\section{The proof of Theorem~\ref{mainthm0}}
%In view of Theorem~\ref{thm-CLR}, our proof of Theorem~\ref{mainthm} amounts to showing that for any $\delta>0$ there exists $\ell\geq3$ such that for a set of $(\alpha,\beta)\in E_\ell$ 
The proof of Theorem~\ref{mainthm0} breaks into three steps. In \S\ref{construct}, for each fixed $\beta$ we construct two sets of $\alpha\in[0,1)$ whose intersection corresponds to the $(\alpha,\beta)$-shifts satisfying the condition in Theorem~\ref{thm-CLR}. In \S\ref{exist-sec} we show that 
under certain conditions on $\beta$, these two sets contain affine copies of the same Cantor set.
In \S\ref{dim-est} we estimate the Hausdorff dimension of the intersection of these affine copies, and complete the proof of Theorem~\ref{mainthm0}.

\subsection{The definition of two parameter sets}\label{construct}
 %In dynamical systems terms, $\Lambda_{\alpha,\beta}$ is a hyperbolic set, which persists under small variations of parameters.
 Throughout this section we assume $\ell\geq3.$ Let
  $\Sigma_\ell^*$ denote the subspace $\{1,\ldots,\ell-1\}^{\mathbb N}$ of $\Sigma_\ell$.
%Thick Cantor set for $T_{1-\beta+\lfloor\beta\rfloor,\beta}\sim T_{0,\beta}$.
%\[\Lambda_{0,\beta}=\bigcap_{n=0}^\infty T_{0,\beta}^{-n}\left(\left[\frac{1}{\beta},\frac{\lceil\beta\rceil-1}{\beta}\right]\right)\]
%\[\Lambda_{0,\beta}\supset\Gamma_{0,\beta}= \bigcap_{n=0}^\infty T_{0,\beta}^{-n}\left(\bigcup_{j=1}^{k-1} {\rm int}I_j\right)=i_{0,\beta}^{-1}(\{1,\ldots,k-2\}^{\mathbb N})\]
For $(\alpha,\beta)\in E_\ell$ define
\[\Lambda_{\alpha,\beta}=\{x\in[0,1)\colon (e_{\alpha,\beta,n}(x))_{n=1}^\infty\in\Sigma_\ell^* \}.\]
It is easy to see that 
$T_{\alpha,\beta}(\Lambda_{\alpha,\beta})=\Lambda_{\alpha,\beta}$, and $T_{\alpha,\beta}$ acts on $\Lambda_{\alpha,\beta}$ as the restriction of the left shift $\sigma$
to $\Sigma_\ell^*$.
%If $\ell=2$ then $\Lambda_{\alpha,\beta}$ is a singleton. 
Since $\ell\geq3$, $\Lambda_{\alpha,\beta}$ is a Cantor set. 
For each $\omega\in \Sigma_\ell^*$,
let $x_{\alpha,\beta}(\omega)$ denote the point in $\Lambda_{\alpha,\beta}$ whose symbol sequence is $\omega$.
By \eqref{expansion} we have
\begin{equation}\label{formula0}x_{\alpha,\beta}(\omega)=\sum_{n=1}^\infty\frac{\omega_n-\alpha}{\beta^n}.\end{equation}
In particular, $\Lambda_{\alpha,\beta}$ is a translation of $\Lambda_{0,\beta}$.

Let us record two estimates for $\alpha=0$. For all $\omega\in\Sigma_\ell^*$ we have 
\begin{equation}\label{x}\frac{1}{\beta}\leq x_{0,\beta}(\omega)<\frac{\lfloor\beta\rfloor}{\beta}.\end{equation}
The thickness of $\Lambda_{0,\beta}$ can be immediately computed and evaluated as follows: \begin{equation}\label{tau-thick}\tau(\Lambda_{0,\beta})=\frac{\sum_{j=1}^{\ell-1}|I^j_{0,\beta}|}{|I^0_{0,\beta}|+|I^\ell_{0,\beta}|}=
\frac{\lfloor\beta\rfloor-1}{1-\lfloor\beta\rfloor+\beta}>\frac{\lfloor\beta\rfloor-1}{2}\geq\frac{\ell-2}{2}.\end{equation}
%where $|I_j|$ denotes the Euclidean length of $I_j$.

For each $\beta\in(\ell-1,\ell+1)$, let $E_\ell(\beta)=\{\alpha\in[0,1)\colon(\alpha,\beta)\in E_\ell\}$. We have
\begin{equation}\label{el}E_\ell(\beta)=\begin{cases}[\ell-\beta,1)&\text{ if }\beta\leq\ell,\\
[0,\ell+1-\beta)&\text{ if }\ell<\beta.\end{cases}\end{equation}
%E_\ell=[\ell-\beta,\ell+1-\beta)\cap[0,1)
Define \[\begin{split}R_{\beta}&=\{\alpha\in E_\ell(\beta)\colon x_{\alpha,\beta}(\omega)=T_{\alpha,\beta}(0)\text{ for some }\omega\in\Sigma_\ell^*\},\\
S_{\beta}&=\{\alpha\in E_\ell(\beta)\colon x_{\alpha,\beta}(\omega)=\lim_{x\nearrow 1}T_{\alpha,\beta}(x)\text{ for some }\omega\in\Sigma_\ell^*\}.\end{split}\] 
%By continuity, both are closed set.

%Since $\Lambda_{\alpha,\beta}$ is a singleton when $\ell=2$, we will require $\ell\geq3$.
\begin{lemma}\label{spec-lem}
Let $\beta\in(\ell-1,\ell+1)$. If $\alpha\in R_\beta\cap S_\beta$
then $\Sigma_{\alpha,\beta}$ has the specification property. \end{lemma}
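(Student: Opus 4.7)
The plan is to apply Theorem~\ref{thm-CLR} and reduce the statement to showing that both $K(u_{\alpha,\beta})$ and $K(v_{\alpha,\beta})$ are empty, hence finite. Note that since $\ell\geq 3$ and $\beta>\ell-1$, we have $\beta>2$, so the theorem applies.

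The first step is to identify the initial symbol and the tail of each critical sequence. Since $\alpha\in[0,1)$, a direct computation gives $T_{\alpha,\beta}(0)=\alpha$ and $u_{\alpha,\beta,1}=\lfloor\alpha\rfloor=0$. From $\ell\leq\alpha+\beta<\ell+1$ I would show that for $x$ sufficiently close to $1$, $\lfloor\beta x+\alpha\rfloor=\ell$; passing to the limit yields $v_{\alpha,\beta,1}=\ell$ and $\lim_{x\nearrow 1}T_{\alpha,\beta}(x)=\alpha+\beta-\ell\in[0,1)$. Next, the hypothesis $\alpha\in R_\beta$ says $T_{\alpha,\beta}(0)=x_{\alpha,\beta}(\omega)$ for some $\omega\in\Sigma_\ell^*$, and since $T_{\alpha,\beta}$ acts on $\Lambda_{\alpha,\beta}$ as the restriction of $\sigma$ to $\Sigma_\ell^*$, this forces $u_{\alpha,\beta,n+1}=\omega_n\in\{1,\ldots,\ell-1\}$ for every $n\geq 1$. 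Symmetrically, $\alpha\in S_\beta$ gives some $\omega'\in\Sigma_\ell^*$ with $\lim_{x\nearrow 1}T_{\alpha,\beta}(x)=x_{\alpha,\beta}(\omega')$, whence $v_{\alpha,\beta,n+1}=\omega'_n\in\{1,\ldots,\ell-1\}$ for every $n\geq 1$.

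With these structural facts in hand the conclusion is immediate. Any $n\in K(u_{\alpha,\beta})$ would require $v_{\alpha,\beta,1}=u_{\alpha,\beta,1+j}$ for some $j\in\mathbb N$; but the left side equals $\ell$ while every entry of $u_{\alpha,\beta}$ lies in $\{0,1,\ldots,\ell-1\}$, a contradiction. Hence $K(u_{\alpha,\beta})=\emptyset$. Symmetrically, $n\in K(v_{\alpha,\beta})$ would demand $u_{\alpha,\beta,1}=v_{\alpha,\beta,1+j}$; but $u_{\alpha,\beta,1}=0$ while $v_{\alpha,\beta,1+j}\in\{1,\ldots,\ell\}$ for every $j\in\mathbb N$ (it equals $\ell$ if the convention $0\in\mathbb N$ is in force, and lies in $\{1,\ldots,\ell-1\}$ otherwise). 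Thus $K(v_{\alpha,\beta})=\emptyset$ as well, and Theorem~\ref{thm-CLR} yields the specification property.

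No substantial obstacle remains: the content of the lemma is packaged into the definitions of $R_\beta$ and $S_\beta$ together with Theorem~\ref{thm-CLR}. The key qualitative observation is the asymmetry between the two critical sequences---every tail entry of $u_{\alpha,\beta}$ avoids the symbol $\ell$, while every tail entry of $v_{\alpha,\beta}$ avoids $0$---which makes the matching condition defining each $K$-set fail already at the very first symbol.
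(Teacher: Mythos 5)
Your proof is correct and follows essentially the same route as the paper: membership in $R_\beta\cap S_\beta$ forces the tails of $u_{\alpha,\beta}$ and $v_{\alpha,\beta}$ to lie in $\{1,\ldots,\ell-1\}$, so both $K$-sets are empty and Theorem~\ref{thm-CLR} applies. You are in fact slightly more careful than the paper: you verify $\beta>2$, and you pair $R_\beta$ with $K(u_{\alpha,\beta})$ and $S_\beta$ with $K(v_{\alpha,\beta})$, which is the pairing consistent with the paper's definitions (the paper's own proof writes the two labels the other way around, harmlessly, since both hypotheses are assumed and both conclusions are needed).
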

\begin{proof}
If $\alpha\in R_\beta$ then $T_{\alpha,\beta}(0)\in\Lambda_{\alpha,\beta}$, and so 
 $T_{\alpha,\beta}^{n}(0)\in\Lambda_{\alpha,\beta}$
for every $n\in\mathbb N$, which yields $K(v_{\alpha,\beta})=\emptyset$. Similarly,
if $\alpha\in S_\beta$ then $\lim_{x\nearrow 1}T_{\alpha,\beta}(x)\in\Lambda_{\alpha,\beta}$, and so 
 $\lim_{x\nearrow 1}T_{\alpha,\beta}^{n}(x)\in\Lambda_{\alpha,\beta}$
for every $n\in\mathbb N$, which yields $K(u_{\alpha,\beta})=\emptyset$.
Then the assertion of the lemma follows from
Theorem~\ref{thm-CLR}.
\end{proof}

 %The point is that $T_{\alpha,\beta}(0)$ and $\lim_{x\nearrow1}T_{\alpha,\beta}(x)$ move much faster than $\Lambda_{\alpha,\beta}$ as $\alpha$ is varied with $\beta$ fixed. As a result, $R_\beta$ and $S_\beta$ inherit an essential structure of $\Lambda_{0,\beta}$.
%We would like to show that $R_\beta\cap S_\beta$ has Hausdorff dimension close to $1$, for appropriately chosen $\beta$. 

\subsection{Existence of affine copies of $\Lambda_{0,\beta}$}\label{exist-sec}
We show that both $R_\beta$ and $S_\beta$ contain an affine copy of $\Lambda_{0,\beta}$ under certain conditions on $\beta$.

%We give explicit representations of $R_\beta$ and $S_\beta$ to analyze their intersection.
\begin{lemma}\label{Cantor1}
For all $\beta\in(\ell-1,\ell+1)$ 
the following statements hold:
\begin{itemize}\item[(a)] we have \[R_{\beta}=\left\{\frac{\beta-1}{\beta}x_{0,\beta}(\omega)\colon\omega\in \Sigma_\ell^*\right\}\cap E_\ell(\beta);\]
\item[(b)] if $\beta\leq\ell$ and $R_\beta\neq\emptyset$, then $R_\beta$ is a singleton or a Cantor set.
\end{itemize}\end{lemma}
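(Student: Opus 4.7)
For (a), a direct computation suffices. Since $\alpha\in[0,1)$, the definition of $T_{\alpha,\beta}$ yields $T_{\alpha,\beta}(0)=\alpha-\lfloor\alpha\rfloor=\alpha$. The formula \eqref{expansion} splits as $x_{\alpha,\beta}(\omega)=x_{0,\beta}(\omega)-\alpha/(\beta-1)$, after using $\sum_n\beta^{-n}=1/(\beta-1)$. Setting $x_{\alpha,\beta}(\omega)=T_{\alpha,\beta}(0)=\alpha$ and solving for $\alpha$ gives
\[
\alpha\Bigl(1+\tfrac{1}{\beta-1}\Bigr)=x_{0,\beta}(\omega)\quad\Longleftrightarrow\quad\alpha=\tfrac{\beta-1}{\beta}\,x_{0,\beta}(\omega),
\]
and intersecting the resulting set with $E_\ell(\beta)$ yields exactly the description in (a).

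For (b), by (a) the set $R_\beta$ is the intersection of $C:=\tfrac{\beta-1}{\beta}\Lambda_{0,\beta}$ with $E_\ell(\beta)=[\ell-\beta,1)$. Since $\Lambda_{0,\beta}$ is a Cantor set (as $\ell\geq 3$) and $C$ is its affine image, $C$ is itself a Cantor set. Evaluating \eqref{formula0} at $\omega=1^\infty$ and $\omega=(\ell-1)^\infty$ gives the extremes $\min\Lambda_{0,\beta}=1/(\beta-1)$ and $\max\Lambda_{0,\beta}=(\ell-1)/(\beta-1)$, whence $\min C=1/\beta$ and $\max C=(\ell-1)/\beta$. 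Since $\beta>\ell-1$ we have $\max C<1$, so the upper bound in $[\ell-\beta,1)$ is inactive and $R_\beta=C\cap[\ell-\beta,\infty)$.

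It remains to analyze this one-sided truncation of a Cantor set, splitting into three subcases according to the location of $\ell-\beta$. If $\ell-\beta\leq 1/\beta=\min C$, then $R_\beta=C$, a Cantor set. If $\ell-\beta=(\ell-1)/\beta=\max C$, then $R_\beta=\{(\ell-1)/\beta\}$, a singleton. Otherwise $\ell-\beta$ cuts $C$ strictly in the interior: if $\ell-\beta$ either lies in a bounded gap of $C$ or is a right-accumulation point of $C$, then $R_\beta=C\cap[c^+,\infty)$ (where $c^+$ is the right endpoint of the gap if applicable) is compact, totally disconnected, and perfect, hence a Cantor set. The main obstacle is the exceptional configuration in which $\ell-\beta$ coincides with the left endpoint of a bounded gap of $C$, equivalently $\beta(\ell-\beta)/(\beta-1)=x_{0,\beta}(\omega^*)$ for some $\omega^*\in\Sigma_\ell^*$ ending in $(\ell-1)^\infty$. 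To handle this I would invoke the self-similar decomposition
\[
\Lambda_{0,\beta}=\bigcup_{j=1}^{\ell-1}\Bigl(\tfrac{j}{\beta}+\tfrac{1}{\beta}\Lambda_{0,\beta}\Bigr),
\]
which propagates the cutoff into a sub-Cantor at a smaller scale; iterating, the exceptional configuration either exhausts $C$ on the right, giving the singleton alternative, or leaves a genuine Cantor remainder, giving the Cantor alternative. This matches the claimed dichotomy under the nonemptiness hypothesis.
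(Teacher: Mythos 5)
Your proof of part (a) is exactly the paper's: use $T_{\alpha,\beta}(0)=\alpha$, the translation identity $x_{\alpha,\beta}(\omega)=x_{0,\beta}(\omega)-\alpha/(\beta-1)$ coming from \eqref{expansion}, and solve the resulting linear equation for $\alpha$; nothing to add there. For part (b) you are considerably more careful than the paper, which disposes of it in one sentence (``a Cantor set not containing $1$ intersected with $[\ell-\beta,1)$''). Your computation of $\min C=1/\beta$ and $\max C=(\ell-1)/\beta<1$ (from $\beta>\ell-1$) is correct and rightly reduces everything to a one-sided truncation, and your case analysis correctly isolates the only delicate configuration.

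The problem is precisely the case you flag and then wave at: if $\ell-\beta$ is the left endpoint of a bounded gap of $C$, then $\ell-\beta$ is a point of $C$ that is accumulated by $C$ only from the left, so in $C\cap[\ell-\beta,1)$ it becomes an \emph{isolated point}, while to the right of that gap there remains a nonempty perfect piece (you can check that $\ell-\beta<\max C$ always holds on $(\ell-1,\ell]$, since $\beta(\ell-\beta)-(\ell-1)=-(\beta-1)(\beta-\ell+1)<0$). The resulting set is a Cantor set together with one isolated point, which is neither a singleton nor a Cantor set, and iterating the self-similar decomposition $\Lambda_{0,\beta}=\bigcup_j(\tfrac{j}{\beta}+\tfrac1\beta\Lambda_{0,\beta})$ does not remove that isolated point; so your final paragraph does not establish the claimed dichotomy. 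To close the gap you would need either to rule the configuration out --- e.g.\ by noting that left endpoints of gaps correspond to symbol sequences ending in $(\ell-1)^\infty$, and checking whether such sequences are actually realized as itineraries at $\alpha=\ell-\beta$ given that the partition intervals are half-open --- or to settle for the weaker statement ``$R_\beta$ is a Cantor set possibly together with one point,'' which is all that is needed downstream: the lemma is only ever invoked under \eqref{beta-1}, i.e.\ when $\ell-\beta<\min C$ and $R_\beta=C$ outright. In fairness, the paper's own proof of (b) is silent on this very point, so you have at least located the genuine issue even if your resolution of it is not yet a proof.
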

\begin{proof}
From \eqref{formula0}, for all $\omega\in\Sigma_\ell^*$ and all $(\alpha,\beta)\in E_\ell$ we have
\begin{equation}\label{formula1}x_{\alpha,\beta}(\omega)=x_{0,\beta}(\omega )-\frac{\alpha}{\beta-1}.\end{equation} 

Let $\beta\in(\ell-1,\ell+1)$. 
By $T_{\alpha,\beta}(0)=\alpha-\lfloor\alpha\rfloor=\alpha$ and \eqref{formula1}, 
if $\alpha\in R_\beta$ then there exists $\omega\in\Sigma_\ell^*$ such that 
$\alpha=x_{0,\beta}(\omega )-\frac{\alpha}{\beta-1},$
or equivalently $\alpha=\frac{\beta-1}{\beta}x_{0,\beta}(\omega)$.
Conversely, for any $\omega\in\Sigma_\ell^*$ the number $\alpha=\frac{\beta-1}{\beta}x_{0,\beta}(\omega)$ satisfies the equation
\[\alpha=x_{0,\beta}(\omega )-\frac{\alpha}{\beta-1}.\]
Hence, if $\alpha\in E_\ell(\beta)$
then $\alpha\in R_\beta$. This verifies Lemma~\ref{Cantor1}(a).

Since $\ell\geq3$, the first set in the right-hand side of the equality in Lemma~\ref{Cantor1}(a) is a Cantor set not containing $1$. By \eqref{el}, if $\beta\leq\ell$ then we have $E_\ell=[\ell-\beta,1)$.
Hence Lemma~\ref{Cantor1}(b) follows.
\end{proof}

To prove an analogous lemma on $S_\beta$, 
define
%\textcolor{red}{$C_\beta\to R_\beta$, $D_\beta\to S_\beta$, $D_\beta'\to S_\beta^1$, $D_\beta''\to S_\beta^2$}
\[\tilde S_{\beta}=\left\{\frac{\beta-1}{\beta}(x_{0,\beta}(\omega)+1-\beta+\lfloor\beta\rfloor)\colon\omega\in \Sigma_\ell^*\right\}\cap[ 1-\beta+\lfloor\beta\rfloor,\min\{\ell+1-\beta,1\}).\]
%\[\begin{split}S_{\beta}^1&=\left\{\frac{\beta-1}{\beta}(x_{0,\beta}(\omega)+1-\beta+\lfloor\beta\rfloor)\colon\omega\in \Sigma_\ell^*\right\}\cap[ 1-\beta+\lfloor\beta\rfloor,\min\{\ell+1-\beta,1\}),\\
%S_{\beta}^2&=\left\{\frac{\beta-1}{\beta}(x_{0,\beta}(\omega)-\beta+\lfloor\beta\rfloor)\colon\omega\in \Sigma_\ell^*\right\}\cap[0, 1-\beta+\lfloor\beta\rfloor).\end{split}\]
\begin{lemma}\label{Cantor2}
For all $\beta\in(\ell-1,\ell+1)$ the following statements hold:
\begin{itemize}
 %   \item[(a)] 
%$S_\beta=S_{\beta}^1\cup S_\beta^2$;
\item[(a)] 
$\tilde S_\beta$ is contained in $S_{\beta}$;
%\item[(b)] if $\beta\leq\ell$ and  $S_\beta^1\neq\emptyset$ then $S_\beta^1$ is a singleton or a Cantor set or $S_\beta^1\cup\{1\}$ is a Cantor set
\item[(b)] if $\beta\leq\ell$ and \begin{equation}\label{cantor-eq1}1-\beta+\lfloor\beta\rfloor\leq\frac{\beta-1}{\beta}\left(\frac{1}{\beta}+1-\beta+\lfloor\beta\rfloor\right),\end{equation}
and
\begin{equation}\label{cantor-eq2}\frac{\beta-1}{\beta}\left(\frac{\lfloor\beta\rfloor}{\beta}+1-\beta+\lfloor\beta\rfloor\right)<1,\end{equation} then \[\tilde S_\beta=\left\{\frac{\beta-1}{\beta}(x_{0,\beta}(\omega)+1-\beta+\lfloor\beta\rfloor)\colon\omega\in \Sigma_\ell^*\right\}.\]
%\item[(c)] if $S_\beta^2\neq\emptyset$ then $S_\beta^2$ is a singleton or a Cantor set.
\end{itemize}
\end{lemma}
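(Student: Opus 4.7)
The plan is to proceed in parallel with the proof of Lemma~\ref{Cantor1}: solve explicitly the defining equation of $S_\beta$ using identity~\eqref{formula1}, then propagate the envelope bounds~\eqref{x} to determine when the solution automatically lies in $E_\ell(\beta)$.

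For (a), I would first observe that $(\alpha,\beta)\in E_\ell$ gives $\ell\leq\alpha+\beta<\ell+1$, hence $\lim_{x\nearrow 1}T_{\alpha,\beta}(x)=\beta+\alpha-\ell$. Combined with~\eqref{formula1}, the condition $x_{\alpha,\beta}(\omega)=\beta+\alpha-\ell$ is equivalent to
\[
\alpha=\frac{\beta-1}{\beta}\bigl(x_{0,\beta}(\omega)+\ell-\beta\bigr).
\]
A small case split reconciles this with the expression appearing in the definition of $\tilde S_\beta$: when $\beta\in(\ell-1,\ell)$ one has $\lfloor\beta\rfloor=\ell-1$, so $\ell-\beta=1-\beta+\lfloor\beta\rfloor$, and by~\eqref{el} the constraint interval $[1-\beta+\lfloor\beta\rfloor,\min\{\ell+1-\beta,1\})$ coincides with $E_\ell(\beta)$; when $\beta\in[\ell,\ell+1)$ this constraint interval is empty (it collapses to $[\ell+1-\beta,\ell+1-\beta)$), so the assertion is vacuous. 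Either way, any element of $\tilde S_\beta$ has the above form with $\alpha\in E_\ell(\beta)$ and hence satisfies the defining condition of $S_\beta$.

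For (b), under $\beta\leq\ell$ the constraint interval equals $[\ell-\beta,1)=[1-\beta+\lfloor\beta\rfloor,1)$, and the task is to verify that for every $\omega\in\Sigma_\ell^*$ the value $\alpha(\omega):=\frac{\beta-1}{\beta}(x_{0,\beta}(\omega)+\ell-\beta)$ already lies in this interval, making the intersection redundant. Since $\alpha(\omega)$ is an affine increasing function of $x_{0,\beta}(\omega)$, I would simply push the two-sided bound~\eqref{x} through the formula: the lower bound $x_{0,\beta}(\omega)\geq 1/\beta$ reduces $\alpha(\omega)\geq 1-\beta+\lfloor\beta\rfloor$ to exactly \eqref{cantor-eq1}, while the upper bound $x_{0,\beta}(\omega)<\lfloor\beta\rfloor/\beta$ reduces $\alpha(\omega)<1$ to exactly \eqref{cantor-eq2}. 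The only real obstacle is the bookkeeping with the case $\lfloor\beta\rfloor=\ell-1$ versus $\lfloor\beta\rfloor=\ell$ in (a), which dictates whether the constraint interval in $\tilde S_\beta$ is $E_\ell(\beta)$ or empty; once that is handled, both parts reduce to a mechanical algebraic adaptation of Lemma~\ref{Cantor1}.
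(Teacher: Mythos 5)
Your proposal is correct and follows essentially the same route as the paper: solve the defining equation via \eqref{formula1}, identify $\lim_{x\nearrow1}T_{\alpha,\beta}(x)$ with $\beta+\alpha-\lfloor\beta+\alpha\rfloor$ using the lower constraint $\alpha\geq 1-\beta+\lfloor\beta\rfloor$ (your case split on $\lfloor\beta\rfloor\in\{\ell-1,\ell\}$ is just an unpacking of the paper's observation that this inequality forces $\lfloor\beta+\alpha\rfloor=1+\lfloor\beta\rfloor$), and then push the bounds \eqref{x} through the affine formula so that \eqref{cantor-eq1} and \eqref{cantor-eq2} make the intersection with $[1-\beta+\lfloor\beta\rfloor,1)$ redundant. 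The only nitpick is your identification $[\ell-\beta,1)=[1-\beta+\lfloor\beta\rfloor,1)$ in (b), which fails at the single point $\beta=\ell$, where however \eqref{cantor-eq1} is violated and (b) is vacuous, so nothing is lost.
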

%\textcolor{red}{No need for $S_\beta^2.$}
\begin{proof}
%By $\lim_{x\nearrow1}T_{\alpha,\beta}(x)=\beta+\alpha-\lfloor\beta+\alpha\rfloor$ and \eqref{formula1}, if $\alpha\in S_\beta$ then there exists $\omega\in\Sigma_\ell^*$  such that  \[x_{0,\beta}(\omega )-\frac{\alpha}{\beta-1}=\beta+\alpha-\lfloor\beta+\alpha\rfloor.\]
%By the implicit function theorem,
%\[\frac{d\alpha}{d\beta}=-\frac{\frac{dx_{0,\beta}(\omega)}{d\beta}+\frac{\alpha}{(\beta-1)^2}-1}{-\frac{1}{\beta-1}-1}\] Clearly we have \[x_{0,\beta}(\omega )-\frac{\alpha}{\beta-1}=\begin{cases}\beta+\alpha-1-\lfloor\beta\rfloor&\text{ if } 1-\beta+\lfloor\beta\rfloor\leq\alpha<\min\{\ell+1-\beta,1\};\\ \beta+\alpha-\lfloor\beta\rfloor&\text{ if }0\leq\alpha< 1-\beta+\lfloor\beta\rfloor.\end{cases}\]
%the last equality holds if $\beta=1.9999$ and $\alpha>0.00001$.i.e, $\ell\geq\lfloor\beta\rfloor$, i.e., $\alpha\geq 1-\beta+\lfloor\beta\rfloor$ If the first alternative holds then rearranging the equality yields $\alpha\in S_\beta^1$. Similarly, if the second alternative holds then $\alpha\in S_\beta^2$.

Let $\alpha\in \tilde S_\beta$. There exists $\omega\in\Sigma_\ell^*$ such that
\[\alpha=\frac{\beta-1}{\beta}(x_{0,\beta}(\omega)+1-\beta+\lfloor\beta\rfloor)\]
or equivalently
\begin{equation}\label{formula2}x_{0,\beta}(\omega )-\frac{\alpha}{\beta-1}=\beta+\alpha-1-\lfloor\beta\rfloor.\end{equation}
Since $1-\beta+\lfloor\beta\rfloor\leq\alpha$
we have $1+\lfloor\beta\rfloor=\lfloor\beta+\alpha\rfloor$, or equivalently
\begin{equation}\label{formula3}\beta+\alpha-1-\lfloor\beta\rfloor=\beta+\alpha-\lfloor\beta+\alpha\rfloor.\end{equation}
Clearly we have
\begin{equation}\label{formula4}\beta+\alpha-\lfloor\beta+\alpha\rfloor=\lim_{x\nearrow1}T_{\alpha,\beta}(x).\end{equation}
From \eqref{formula1}, \eqref{formula2}, \eqref{formula3}, \eqref{formula4} we obtain
\[x_{0,\beta}(\omega )-\frac{\alpha}{\beta-1}=\beta+\alpha-\lfloor\beta+\alpha\rfloor=\lim_{x\nearrow1}T_{\alpha,\beta}(x),\]
namely $\alpha\in S_\beta$. 
%If $\alpha\in S_\beta^2$, then a similar argument yields $\alpha\in S_\beta$.
This verifies Lemma~\ref{Cantor2}(a).

If $\beta\leq\ell$ then we have $\min\{\ell+1-\beta,1\}=1$. Then \eqref{x}, \eqref{cantor-eq1} and \eqref{cantor-eq2} together imply that
the first set in the definition of $\tilde S_\beta$ is contained in
$[1-\beta+\lfloor\beta\rfloor,1)$. This verifies Lemma~\ref{Cantor2}(b).
%If this set does not contain $1$, then $S_\beta^1=\emptyset$, or $S_\beta^1$ is a singleton, or a Cantor set, or $S_\beta^1\cup\{1\} $ is a Cantor set. If this set contains $1$, then $S_\beta^1$ is a singleton or empty, or $S_\beta^1\cup\{1\}$ is a Cantor set.
%as required in Lemma~\ref{Cantor2}(b).
%Since $\lim_{x\nearrow1}T_{1-\beta+\lfloor\beta\rfloor,\beta}(x)=0$,  we have $1-\beta+\lfloor\beta\rfloor\notin S_\beta$.
%By this and Lemma~\ref{Cantor2}(a),
%The first set in the definition of $S_\beta^2$ is a Cantor set not containing $1-\beta+\lfloor\beta\rfloor$.
%So $S_\beta^2$ is a singleton or a Cantor set unless empty as required in Lemma~\ref{Cantor2}(c).
\end{proof}
%\begin{remark}Since $E_\ell=[\ell-\beta,\ell+1-\beta)\cap[0,1)$ and $\sup_{\omega\in\Sigma_\ell^*}x_{0,\beta}(\omega)\leq\lfloor\beta\rfloor/\beta$, if \[\frac{\beta-1}{\beta}\frac{\lfloor\beta\rfloor}{\beta}<\ell-\beta\] then $R_\beta=\emptyset$. \end{remark}
%\begin{remark} Since $x_0(\omega)\leq\lfloor\beta\rfloor/\beta$, $S_\beta$ can be empty when $\beta-\lfloor\beta\rfloor$ is close to $1$.\end{remark} %If $\ell$ is sufficiently large and $\lfloor\beta\rfloor+1-\beta\sim 0$, then $D_\beta'$ is a Cantor set, whose convex hull converges to $[0,1]$ in the Hausdorff topology as $\ell\to\infty$. 

\subsection{Estimate of Hausdorff dimension}\label{dim-est}
Let $\varepsilon\in(0,1)$.
For all $\beta\in(\ell-\varepsilon,\ell]$ we have
\begin{equation}\label{epsilon}
0\leq1-\beta+\lfloor\beta\rfloor<\varepsilon.\end{equation}
We assume $\varepsilon$ is sufficiently small depending on $\ell$ so that 
for all $\beta\in(\ell-\varepsilon,\ell]$, \eqref{cantor-eq1}, \eqref{cantor-eq2} hold and in addition
\begin{equation}\label{beta-1}\ell-\beta<\frac{\beta-1}{\beta}\frac{1}{\beta}.\end{equation}
Note that $\varepsilon\to0$ as $\ell\to\infty$.

Let $\beta\in(\ell-\varepsilon,\ell]$. 
By \eqref{x}, \eqref{beta-1} and
%\quad \text{and}\quad \frac{\beta-1}{\beta}\frac{\lfloor\beta\rfloor}{\beta}<\ell+1-\beta,\end{equation}
Lemma~\ref{Cantor1} we have
\[R_{\beta}=\left\{\frac{\beta-1}{\beta}x_{0,\beta}(\omega)\colon\omega\in \Sigma_\ell^*\right\}.\]
Then $R_\beta$ is an affine copy of the Cantor set $\Lambda_{0,\beta}$. Clearly the convex hull of $R_\beta$ converges to $[0,1]$ in the Hausdorff topology as $\ell\to\infty$. 
By Lemma~\ref{Cantor2}(b),
$\tilde S_\beta$ is an affine copy of $\Lambda_{0,\beta}$, and by
 \eqref{epsilon}, the convex hull of $\tilde S_\beta$ converges to $[0,1]$ in the Hausdorff topology as $\ell\to\infty$. Consequently, if $\ell$ is sufficiently large
 then for all $\beta\in(\ell-\varepsilon,\ell]$, $R_\beta$ and $\tilde S_\beta$ are interleaved Cantor sets of the same thickness $\tau(\Lambda_{0,\beta})$.
By Theorem~\ref{cant-inter}, $R_\beta\cap \tilde S_\beta$ contains a Cantor set with thickness at least $(1/2)\sqrt{\tau(\Lambda_{0,\beta}) }
$, which is bounded from below by $\sqrt{(\ell-2) /8 }$
by \eqref{tau-thick}.
By Proposition~\ref{lower}, %the result of Newhouse \cite[p.107]{New79} (see also \cite[p.77, Proposition~5]{PalTak93}) which asserts that the Hausdorff dimension of a Cantor set in $\mathbb R$ with thickness $\tau$ is at least $\log2/\log(2+1/\tau)$, 
we get
\[\dim(R_\beta\cap \tilde S_\beta)\geq\frac{\log2}{\log(2+\sqrt{8/(\ell-2) })}.\]
By \cite[Corollary~7.12]{Fal}, 
we obtain
\[\dim\{(\alpha,\beta)\in E_\ell\colon \beta\in(\ell-\varepsilon,\ell], \alpha\in R_{\beta}\cap \tilde S_{\beta}\}\geq\frac{\log2}{\log(2+\sqrt{8/(\ell-2) })}+1.\]
%for each $(\alpha,\beta)\in C$ the one-dimensional affine subspace $A_{(\alpha,\beta)}=\mathbb R\times\{\beta\}$ containing $(\alpha,\beta)$ intersects 
%$\log2/\log(2+\sqrt{8/(\ell-2) })$.
%Since $\dim C=2$, By Theorem~\ref{thm-HKM} we obtain
From this estimate and Lemma~\ref{spec-lem} and
 Lemma~\ref{Cantor2}(a),  
%if $(\alpha,\beta)\in E_\ell$ is contained in this set then $\Sigma_{\alpha,\beta}$ has the specification property. 
 %Therefore,
 it follows that %the Hausdorff dimension of the set of $(\alpha,\beta)\in E_\ell$ such that $\Sigma_{\alpha,\beta}$ has the specification property converges to $2$ as $\ell\to\infty$.
 %We will prove the following theorem that immediately implies Theorem~\ref{mainthm0}.
\[\lim_{\ell\to\infty}\dim\{(\alpha,\beta)\in E_\ell\colon\text{$\Sigma_{\alpha,\beta}$ has the specification property}\}=2.\]
 The proof of Theorem~\ref{mainthm0} is complete.

\subsection*{Acknowledgments} The author thanks Yuto Nakajima and Mao Shinoda for fruitful discussions. This research was supported by the JSPS KAKENHI 25K21999 Grant-in-Aid for Challenging Research (Exploratory).


\begin{thebibliography}{99}
%\bibitem{} Dylan Airey, Steve Jackson, Dominik Kwietniak, Bill Mance, Borel complexity of sets of normal numbers via generic points in subshifts with specification, Trans. Amer. Math. Soc. 

\bibitem{BM86} Anne Bertrand-Mathis,
{\it D\'eveloppement en base $\theta$; r\'epartition modulo un de la suite $(x\theta^n)_{n\geq0}$; langages cod\'es et $\theta$-shift}, Bull. Soc. Math. France
{\bf 114} (1986), 271--323.
\bibitem{Bow71} Rufus Bowen, {\it Entropy for group endomorphisms and homogeneous spaces}, Trans. Amer. Math. Soc. {\bf 153} (1971), 401--414.

\bibitem{Buz97} J\'er\^ome Buzzi, {\it Specification on the interval},
Trans. Amer. Math. Soc. {\bf 349} (1997), 2737–2754.

 \bibitem{CLR21} Leonard Carapezza, Marco L\'opez, and Donald Robertson, {\it Unique equilibrium
states for some intermediate beta transformations}, Stoch. Dyn. {\bf 21} (2021), no. 6,
25pp. Id/No 2150035.

\bibitem{DGS} Manfred Denker,  Christian Grillenberger, Karl Sigmund, {\it Ergodic theory on compact spaces}, Lecture Notes in Mathematics, {\bf 527} Springer-Verlag, Berlin-New York, 1976.



\bibitem{Fal} Kenneth Falconer, {\it Fractal geometry. Mathematical foundations and applications}, Second edition. John Wiley \& Sons, Inc., Hoboken, NJ, 2003.

\bibitem{Hal47} Marshall Hall, On the sum and product of continued fractions, Ann. of Math. {\bf 48} (1947), 966--993.

\bibitem{Hof80}  Franz Hofbauer, {\it On intrinsic ergodicity of piecewise monotonic transformations with positive entropy}, Israel J. Math. {\bf 34} (1979), no. 3, (1980), 213--237.

%\bibitem{HKM17} Korn\'elia H\'era, Tam\'as Keleti, Andr\'as M\'ath\'e, {\it Hausdorff dimension of unions of affine subspaces and of Furstenberg-type sets}. J. Fractal Geometry (2019) {\bf 6} 263--284.

\bibitem{HHY17} Hui Hu, Zhihui Li, and Yueli Yu,
{\it A note on $(-\beta)$-shifts with the specification property}, Publ. Math. Debr.
{\bf 91} (2017), 123--131. 

 \bibitem{HKY93} Brian R. Hunt, Ittai Kan, and James A. Yorke, 
{\it When Cantor sets intersect thickly},
Trans. Amer. Math. Soc. {\bf 339} (1993), 869--888. 

\bibitem{IS09} Shunji Ito and Taizo Sadahiro, 
{\it Beta-expansions with negative bases}, {\it Integers} {\bf 9} (2009), 239--259.

\bibitem{KH} Anatole Katok and Boris Hasselblatt, {\it Introduction to the modern theory of dynamical systems}, Encyclopedia of Mathematics and its applications, {\bf 54}
Cambridge University Press, Cambridge, 1995. 

\bibitem{KLO16} Dominik Kwietniak, Martha {\L}\c{a}cka, and Piotr Oprocha, {\it A panorama of specification-like properties and their consequences},
{\it Dynamics and numbers}, 155--186, {\it Contemp. Math.} {\bf 669} (2016),
Amer. Math. Soc., Providence, RI.

\bibitem{LS12}
Lingmin Liao and Wolfgang Steiner, {\it Dynamical properties of the negative beta transformation}, Ergodic Theory Dynam. Systems {\bf 32} (2012), 1673--1690.

\bibitem{Mor98}
Carlos Gustavo T. de A. Moreira,  Sums of regular Cantor sets, dynamics and applications to number theory, Period. Math. Hungar. {\bf 37} (1998), 55--63.



\bibitem{New70} Sheldon E. Newhouse, {\it Nondensity of Axiom A(a) on $S^2$}, 
1970 Global Analysis (Proc. Sympos. Pure Math., Vol. XIV, Berkeley, Calif., 1968) pp. 191–202 Amer. Math. Soc., Providence, R.I. 57.48.

\bibitem{New79} Sheldon E. Newhouse, {\it The abundance of wild hyperbolic sets and non-smooth stable sets
for diffeomorphisms}, Inst. Hautes \'Etudes Sci. Publ. Math. 
{\bf 50} (1979), 101--151.

\bibitem{OS24} Mai Oguchi and Mao Shinoda, {\it Hausdorff dimension of the parameters for $(\alpha,\beta)$-shifts with the specification property},  Dyn. Syst. {\bf 39} (2024), 848--855.

\bibitem{PalTak93} Jacob Palis and Floris Takens, {\it Hyperbolicity and sensitive chaotic dynamics at
homoclinic bifurcations}.
Cambridge studies in advanced mathematics {\bf 35},
Cambridge University Press 1993.

\bibitem{Par60} William Parry, {\it On the $\beta$-expansions of real numbers}, Acta Math. Acad. Sci. Hungar. {\bf 11} (1960), 401--416.

\bibitem{Par64} William Parry, {\it Representations for real numbers}, Acta Math. Acad. Sci. Hungar. {\bf 15} (1964), 95--105.

\bibitem{R57}
Alfr\'ed R\'enyi, {\it Representations for real numbers and their ergodic properties}. Acta Math.
Acad. Sci. Hungar. {\bf 8} (1957), 477--493.

\bibitem{Sch97} J\"org Schmeling, {\it Symbolic dynamics for $\beta$-shifts
and self-normal numbers}. Ergodic Theory Dynam. Systems 
{\bf 17} (1997), 675--694.



\bibitem{Wil91} Robert F. Williams, {\it How big is the intersection of two thick Cantor sets?}, Continuum Theory and Dynamical Systems (Arcata, CA, 1989), pp. 163--175, Contemp. Math., 117 Amer. Math. Soc., Providence, RI, 1991.
\end{thebibliography}
\end{document}